\newtheorem{thm}{Theorem}[section]
\newtheorem{prop}[thm]{Proposition}
\newtheorem{cor}[thm]{Corollary}
\let\oldenumerate\enumerate
\renewcommand{\enumerate}{
  \oldenumerate
  \setlength{\itemsep}{0.5pt}
  \setlength{\parskip}{0pt}
  \setlength{\parsep}{0pt}
}
\begin{document}

\title{Characterization of $\alpha$-excellent $2$-trees}
\author{$^1$Magda Dettlaff, $^{2}$Michael A. Henning\thanks{Research
supported in part by the University of Johannesburg}  \, and  $^{3}$Jerzy Topp\\
\\
$^1$Faculty of Mathematics, Physics and Informatics\\
University of Gda\'nsk \\
80-952 Gda\'nsk, Poland \\
\small \tt magda.dettlaff@ug.edu.pl\\
\\
$^2$Department of Mathematics and Applied Mathematics\\ University of Johannesburg \\ Auckland Park 2006, South Africa \\ \small {\tt mahenning@uj.ac.za} \\
\\
$^3$Institute of Applied Informatics\\ University of Applied Sciences\\ 82-300 Elbl\c{a}g, Poland \\ \small {\tt j.topp@ans-elblag.pl}}

\date{}
\maketitle

\begin{abstract}
A graph is $\alpha$-excellent if every vertex of the graph is contained in some maximum independent set of the graph. In this paper, we present two characterizations of the $\alpha$-excellent $2$-trees.
\end{abstract}

{\small \textbf{Keywords:} independence number; excellent graph; $k$-tree.} \\
\indent {\small \textbf{AMS subject classification: 05C69, 05C85}}

\section{Introduction}

For notation and graph theory terminology we, in general, follow the recent books~\cite{HaHeHe-20,HaHeHe-21,HaHeHe-22}. Specifically, let $G=(V(G),E(G))$ be a~graph with vertex set $V(G)$ and edge set $E(G)$. For a~vertex $v$ of $G$, its \emph{neighborhood\/}, denoted by $N_{G}(v)$, is the set of all vertices adjacent to $v$. The \emph{closed neighborhood\/} of $v$, denoted by $N_{G}[v]$, is the set $N_{G}(v)\cup \{v\}$.  For a set $S \subseteq V(G)$, the  \emph{open neighborhood} of $S$ is the set $N_G(S) = \bigcup_{v \in S} N_G(v)$, and the \emph{closed neighborhood} $N_G[S] = N_G(S) \cup S$. For a positive integer $k$, we let $[k] = \{1, \ldots, k\}$.

A subset $D$ of the vertex set $V(G)$ of a graph $G$ is called a \emph{dominating set} of $G$ if every vertex belonging to $V(G)\setminus D$ is adjacent to at least one vertex in $D$. A subset $I$ of $V(G)$ is \emph{independent} if no two vertices belonging to $I$ are adjacent in $G$. The cardinality of a largest (i.e., maximum) independent set of $G$, denoted by $\alpha(G)$, is called the \emph{independence number} of $G$. Every largest independent set of a graph is called an $\alpha$-set of the graph. The \emph{independent domination number} of $G$, denoted by $i(G)$, is the cardinality of the smallest independent dominating set of $G$ (or equivalently, the cardinality of a  minimum maximal independent set of vertices in $G$). The \emph{common independence number} of a graph $G$, denoted by $\alpha_c(G)$, is the greatest integer $r$ such that every vertex of $G$ belongs to some independent subset $X$ of $V(G)$ with $|X| \ge r$. It follows immediately from the above definitions that the common independence number is bounded below by the independent domination number and above by the independence number. Formally, for any graph $G$,
\begin{equation}
\label{obser-1} \tag{0} i(G) \le \alpha_c(G) \le \alpha(G).
\end{equation}

The study of independent sets in graphs was begun by Berge \cite{Berge1962, Berge1981} (see also \cite{Berge1985}) and Ore \cite{Ore1962}. We refer the reader to the book~\cite{HaHeHe-22} and to the survey \cite{GoddardHenning} of results on independent domination in graphs published in 2013 by Goddard and Henning. A graph $G$ is said to be \emph{well-covered\/} if $i(G)=\alpha(G)$. Equivalently, $G$ is well-covered if every maximal independent set of $G$ is a maximum independent set of $G$. The concept of well-covered graphs was introduced by Plummer \cite{Plummer1} in 1970. Since then the well-covered graphs were very extensively investigating in many papers. We refer the reader to the excellent (but already older) survey on well-covered graphs by Plummer \cite{Plummer}. We are interested in characterization of \emph{$\alpha$-excellent graphs}, that is, graphs $G$ for which $\alpha_c(G) = \alpha(G)$. Thus, if $G$ is an $\alpha$-excellent graph, every vertex belongs to some $\alpha$-set of $G$. It follows from Inequalities~(\ref{obser-1}) that every well-covered graph is an $\alpha$-excellent graph. The example of the cycle $C_6$ shows that the set of well-covered graphs is properly contained in the set of $\alpha$-excellent graphs.
The $\alpha$-excellent graphs have been studied in \cite{Dettlaff-Henning-Topp-alpha-excellent-bipartite, DLT, Domke, Fricke, Pushpalatha, Pushpalatha2} and in \cite{Berge1962,  Berge1981} as $B$-graphs.
In this paper, we begin the study of $\alpha$-excellent $k$-trees.

\section{Preliminary results}

A vertex $v$ of a graph $G$ is a \emph{simplicial vertex} if every two vertices belonging to $N_G(v)$ are adjacent in $G$. Equivalently, a  simplicial vertex is a vertex that  appears in exactly one clique of a graph, where a~\emph{clique} of a graph $G$ is a maximal complete subgraph of $G$. A clique of a~graph $G$ containing at least one simplicial vertex of $G$ is called a \emph{simplex} of $G$. Note that if $v$ is a simplicial vertex of $G$, then $G[N_G[v]]$ is the unique simplex of $G$ containing $v$. We begin with a simple proposition.

\begin{prop} \label{simplicies-are-pairwise-vertex-disjoint}
No $\alpha$-excellent graph contains a vertex belonging to at least two its simplexes.
\end{prop}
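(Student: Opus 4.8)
The plan is to argue by contradiction: assume $G$ is an $\alpha$-excellent graph and some vertex $v$ lies in two distinct simplexes $Q_1$ and $Q_2$ of $G$. For $i\in\{1,2\}$, choose a simplicial vertex $s_i$ belonging to $Q_i$; by the remark preceding the proposition, $Q_i=G[N_G[s_i]]$ is the unique simplex of $G$ containing $s_i$. Since $v$ lies in two simplexes, $v$ cannot be simplicial (else $G[N_G[v]]$ would be the only simplex through $v$), so $v\notin\{s_1,s_2\}$; and $s_1\ne s_2$, since $s_1=s_2$ would give $Q_1=G[N_G[s_1]]=G[N_G[s_2]]=Q_2$.

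Next I would invoke $\alpha$-excellence to fix an $\alpha$-set $I$ of $G$ with $v\in I$. Because each $Q_i$ is a clique containing $v$ and an independent set meets a clique in at most one vertex, we get $I\cap Q_i=\{v\}$ for $i\in\{1,2\}$; in particular $s_1,s_2\notin I$, and no vertex of $Q_i\setminus\{v\}$ belongs to $I$.

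I then claim $I'=(I\setminus\{v\})\cup\{s_1,s_2\}$ is independent. Indeed, $N_G(s_i)=Q_i\setminus\{s_i\}$, and since $(I\setminus\{v\})\cap Q_i=\emptyset$ the vertex $s_i$ has no neighbour in $I\setminus\{v\}$; moreover $s_1s_2\notin E(G)$, for otherwise $s_2\in N_G(s_1)\subseteq Q_1$, which would force the clique $Q_1$ (the unique clique containing the simplicial vertex $s_2$) to equal $Q_2$, a contradiction. Hence $I'$ is independent with $|I'|=|I|+1>\alpha(G)$, contradicting the maximality of $I$. This contradiction proves the proposition.

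The argument is short and the only point requiring care is the clique‑bookkeeping in the last paragraph: one must use the uniqueness of the simplex through a simplicial vertex to rule out both $s_1=s_2$ and $s_1s_2\in E(G)$, since these are exactly the two ways the enlargement $I'$ could fail to be independent or fail to gain a vertex.
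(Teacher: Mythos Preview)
Your proof is correct and follows exactly the paper's approach: pick simplicial vertices $s_1,s_2$ in the two simplexes through $v$, take an $\alpha$-set $I$ containing $v$, and replace $v$ by $\{s_1,s_2\}$ to contradict maximality. The paper's argument is terser and leaves the verifications that $s_1\ne s_2$, $v\notin\{s_1,s_2\}$, and $s_1s_2\notin E(G)$ implicit, whereas you spell these out carefully (and correctly) using the uniqueness of the simplex through a simplicial vertex.
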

\begin{proof} Assume that a vertex $v$ of a graph $G$ belongs to two simplicies of $G$, say to $G[N_G[u]]$ and $G[N_G[w]]$. If $I$ is a maximum independent set that contains $v$, then $(I \setminus \{v\}) \cup \{u,w\}$ is an independent set of greater cardinality. Thus, $\alpha(G) \ge |(I \setminus \{v\}) \cup \{u,w\}|> |I|$, implying that $v$ does not belong to any $\alpha$-set of $G$, and proving that $G$ is not an $\alpha$-excellent graph.
\end{proof}

For a positive integer $k$, a graph $G$ is called a $k$-\emph{tree} if it can be
obtained from the complete graph $K_k$ by a finite number of applications of
the following operation: add a new vertex and join it to $k$ mutually adjacent
vertices of the existing graph. Certainly, every $1$-tree is a tree and vice
versa. In \cite{Rose}, Rose proved that a graph $G$ is a $k$-tree if and only if
the following conditions are fulfilled: $(i)$ $G$ is connected, $(ii)$ $G$
contains $K_k$ as a subgraph and does not contain $K_{k+2}$ as a subgraph,
$(iii)$ if $v$ and $u$ are nonadjacent vertices of $G$, then the subgraph
induced by the smallest $v\!-\!u$ separator is a complete graph on $k$
vertices. Note that $K_k$ and $K_{k+1}$ are the only $k$-trees of order $k$
and $k+1$, respectively.

It was proved in \cite{Dettlaff-Henning-Topp-alpha-excellent-bipartite} that a bipartite graph (and, in particular, a tree) is an $\alpha$-excellent graph if and only if it has a perfect matching. In this paper we are interested in possible extensions of that characterization to a characterization of $\alpha$-excellent $k$-trees for every positive integer $k$. We begin with the following definition.

A set ${\cal P}$ of complete subgraphs of a graph $G$ is said to be a \emph{perfect $(k+1)$-cover} of $G$ if each subgraph belonging to $\cal P$ is of order $k+1$ and every vertex of $G$ belongs to exactly one subgraph in ${\cal P}$. It is obvious that for $k=1$ there exists a one-to-one correspondence between perfect 2-covers of a graph and perfect matchings of the graph. Here we are interested in the existence of perfect $(k+1)$-covers in $k$-trees. First of all, one can prove that every $k$-tree has at most one perfect $(k+1)$-cover. In the following proposition we present the first relationship between $k$-trees having perfect $(k+1)$-covers and $\alpha$-excellent graphs.

\begin{prop} \label{perfect k-trees which are alpha excellent} If a $k$-tree $G$ has a perfect $(k+1)$-cover, then $G$ is an $\alpha$-excellent graph.
\end{prop}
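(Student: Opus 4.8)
The plan is to reduce the proposition to a purely combinatorial statement about chordal graphs whose vertex set is partitioned into maximal cliques, and then prove that statement by induction on the number of vertices.

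First I would record two elementary observations. Since ${\cal P}$ is a partition of $V(G)$ into cliques, every independent set of $G$ meets each member of ${\cal P}$ in at most one vertex, so $\alpha(G)\le|{\cal P}|$; hence it suffices to prove that every vertex of $G$ lies in some independent set of size $|{\cal P}|$, which then forces $\alpha(G)=|{\cal P}|$ and shows every vertex lies in a maximum independent set, i.e. $\alpha_c(G)=\alpha(G)$. Also, $G$ is chordal and $K_{k+2}$-free, so each $(k+1)$-clique in ${\cal P}$ is actually a maximal clique of $G$. Thus the proposition is a special case of: $(\star)$ \emph{if $H$ is a chordal graph and ${\cal Q}$ is a partition of $V(H)$ into maximal cliques of $H$, then every vertex of $H$ lies in an independent set of $H$ of size $|{\cal Q}|$.}

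I would prove $(\star)$ by induction on $|V(H)|$. If $H$ is complete, then ${\cal Q}=\{V(H)\}$ and any single vertex works. Otherwise $H$ is chordal and not complete, so by the classical theorem of Dirac it has two non-adjacent simplicial vertices $s_1,s_2$. For a simplicial vertex $s$, the set $N_H[s]$ is the unique maximal clique of $H$ containing $s$, hence equals the member of ${\cal Q}$ containing $s$; in particular $N_H[s_1],N_H[s_2]\in{\cal Q}$, and they are distinct (if $N_H[s_1]=N_H[s_2]$ then $s_1$ and $s_2$ would be adjacent), hence disjoint. Consequently, given any target vertex $v_0\in V(H)$, at least one of them, say $L:=N_H[w]$ with $w\in\{s_1,s_2\}$, does not contain $v_0$. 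Put $H':=H-V(L)$: it is chordal, $v_0\in V(H')$, and ${\cal Q}':={\cal Q}\setminus\{L\}$ partitions $V(H')$ into maximal cliques of $H'$ (a maximal clique of $H$ disjoint from $V(L)$ stays maximal after deleting $V(L)$). By induction $v_0$ lies in an independent set $I'$ of $H'$ with $|I'|=|{\cal Q}'|=|{\cal Q}|-1$; since $N_H(w)=V(L)\setminus\{w\}$ is disjoint from $V(H')\supseteq I'$, the set $I'\cup\{w\}$ is an independent set of $H$ of size $|{\cal Q}|$ containing $v_0$.

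The bookkeeping (inheritance of chordality and of the partition-into-maximal-cliques property, and the legality of adjoining $w$) is immediate. The one genuine point is the choice of which clique to delete, and this is where I expect the real work to hide. The naive move — delete the simplex $N_G[w]$ of a simplicial vertex $w$ and recurse — does not work, because a $k$-tree minus a $(k+1)$-simplex need not be a $k$-tree (its clique tree may acquire cut vertices in place of $k$-clique separators), so the induction cannot be carried out inside the class of $k$-trees; the remedy is to pass to the larger class appearing in $(\star)$. Within that class the only thing to verify is the existence of a simplex missing $v_0$, and this uses precisely that ${\cal Q}$ is a \emph{partition}: two simplicial vertices cannot share a closed neighbourhood unless they are adjacent, so their simplices are two disjoint members of ${\cal Q}$ and $v_0$ lies in at most one of them.
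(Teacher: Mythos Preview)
Your proof is correct but follows a genuinely different route from the paper. The paper exploits a specific structural fact about $k$-trees: a $k$-tree of order $n\ge k+1$ is $(k+1)$-partite, and the existence of a perfect $(k+1)$-cover forces $\alpha(G)=n/(k+1)$, which in turn forces all $k+1$ colour classes to have exactly this size; since the colour classes partition $V(G)$ and each is then an $\alpha$-set, every vertex lies in one. This is short and essentially arithmetic, but it leans on the (unproved in the paper) $(k+1)$-colourability of $k$-trees. Your argument instead abstracts away from $k$-trees entirely, proving the stronger statement $(\star)$ for arbitrary chordal graphs partitioned into maximal cliques via Dirac's two-simplicial-vertex theorem and induction. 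The gain is generality and self-containment (you never need the chromatic structure); the cost is that you invoke Dirac's theorem and carry a slightly heavier inductive setup. Both arguments ultimately hinge on the same upper bound $\alpha(G)\le|\mathcal{P}|$, but they certify the matching lower bound through every vertex by completely different mechanisms: colour classes versus peeling off simplices.
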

\begin{proof} Let $G$ be a connected $k$-tree of order $n\ge k+1$. Then $G$ is a
$(k+1)$-partite graph, say $A_1, A_2, \ldots, A_{k+1}$ are partite sets of $G$ and
assume that $|A_1|\ge |A_2|\ge\ldots\ge |A_{k+1}|\ge 1$. In addition, since  $A_1, A_2,
\ldots, A_{k+1}$  are independent sets of vertices and $|A_1|+|A_2|+\cdots +|A_{k+1}|=
n$, it follows that $|A_1|\ge n/(k+1)$, and, therefore, $\alpha(G)\ge |A_1|\ge n/(k+
1)$. Let $I$ be an $\alpha$-set of $G$. Assume now that ${\cal P} = \{P_1,\ldots,
P_{\ell}\}$ is a perfect $(k+1)$-cover of $G$. Then, ${\ell}=n/(k+1)$, $|I\cap V({P_i})|\le 1$ for each $i \in [\ell]$,  and
\[
\alpha(G) = |I| = |I \cap \bigcup_{i=1}^{\ell} V({P_i})| = \sum_{i=1}^{\ell}|I\cap V({P_i})| \le \ell =  \frac{n}{k+1}.
\]
Consequently, $|A_1|= |A_2|= \cdots = |A_{k+1}|= n/(k+1)= \alpha(G)$, and each of the sets $A_1, A_2, \ldots, A_{k+1}$ is an $\alpha$-set of $G$. This implies that every vertex of $G$ belongs to an $\alpha$-set of $G$ and, therefore,  $G$ is an $\alpha$-excellent graph.
\end{proof}

\section{$\alpha$-Excellent $2$-trees}

From Proposition \ref{perfect k-trees which are alpha excellent} we know that a $k$-tree having a perfect $(k+1)$-cover is an $\alpha$-excellent $k$-tree. It is not clear to us whether the converse of this statement is true. That is, we do not know if every $\alpha$-excellent $k$-tree of order at least $k+1$ has a perfect $(k+1)$-cover if $k\ge 3$. However when $k = 2$, we provide in this paper a characterization of $\alpha$-excellent $k$-trees. For notational simplicity, in what follows if three vertices $a$, $b$, and $c$ are mutually adjacent in a~graph $G$, then the induced subgraph $G[\{a, b, c\}]$ of $G$ is isomorphic to $K_3$ and is called a \emph{triangle} in $G$, and we simply write $abc$ rather than $G[\{a, b, c\}]$. To every triangle in a graph $G$, we assign label $R$ or $B$ (as red or blue, resp.), and by $R(G)$ and $B(G)$ we denote the set of all triangles in $G$ that have label $R$ and $B$, respectively.  We also say that $R(G)$ and $B(G)$ are the sets of all ``red'' and ``blue'' triangles in $G$, respectively.\\ [-12pt]

We are now in position to present a constructive characterization of $\alpha$-excellent $2$-trees. For this purpose, let $\cal E$ be the family of labeled $2$-trees defined recursively as follows: \\ [-22pt]
\begin{enumerate}
\item[{\rm (1)}] The family $\cal E$ contains the $2$-tree of order~$3$ in which the only triangle is red, that is, it has label $R$. \\[-12pt]
\item[{\rm (2)}] The family $\cal E$ is closed under the operations ${\cal O}_1$ and ${\cal O}_2$ defined below:
\begin{itemize}
\item \textbf{Operation ${\cal O}_1$.}
   If a graph $G'$ belongs to ${\cal E}$ and $v_1v_2$ is an edge of $G'$, then ${\cal O}_1= {\cal O}_1(v_1,v_2)$ forms a graph $G$ by adding three new vertices $u_1$, $u_2$, $u_3$ to $G'$ in such a way that $v_1v_2u_1$, $v_2u_1u_2$ and $u_1u_2u_3$ are three new triangles, while $R(G)= R(G')\cup \{u_1u_2u_3\}$ and $B(G)= B(G') \cup \{v_1v_2u_1, v_2u_1u_2\}$. In this case we say that we apply the operation ${\cal O}_1$ to the edge $v_1v_2$ of $G'$.
\item \textbf{Operation ${\cal O}_2$.}
    If a graph $G'$ belongs to ${\cal E}$, $v_1v_2v_3$ is a red triangle in $G'$ (that is, $v_1v_2v_3\in R(G')$), and $v_4$ is a neighbor of $v_3$ (it is possible that $v_4\in \{v_1,v_2\} \subseteq N_{G'}(v_3)$), then ${\cal O}_2= {\cal O}_2(v_1v_2,v_3v_4)$ forms a graph $G$ by adding to $G'$  three new vertices $u_0$, $u_1$ and $u_2$ in such a way that $u_0v_1v_2$, $v_3v_4u_1$, and $v_3u_1u_2$ are new triangles, while $R(G)= (R(G')\setminus \{v_1v_2v_3\})\cup \{u_0v_1v_2, v_3u_1u_2\}$ and $B(G)= B(G') \cup \{v_1v_2v_3, v_3v_4u_1\}$. In this case we say that we apply the operation ${\cal O}_2$ to the edge $v_1v_2$ of the triangle $v_1v_2v_3$ and to the edge $v_3v_4$.
\end{itemize}
\end{enumerate}

The operations ${\cal O}_1$ and ${\cal O}_2$ are illustrated in Fig. \ref{Operations}. Note that the operation ${\cal O}_2$  changes ``colors'' of certain triangles, and the red triangle $v_1v_2v_3$ in $G'$ is recolored blue in $G$.

\begin{figure}[h!] \begin{center} \hspace*{2ex}\includegraphics[scale=1.25]{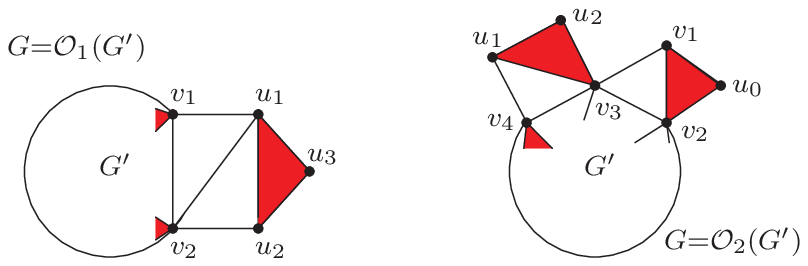}
\caption{The operations ${\cal O}_1$ and ${\cal O}_2$} \label{Operations} \end{center}\end{figure}

From the recursive definition of the graphs belonging to the family $\cal E$ it follows readily that if a $2$-tree $G$ belongs to $\cal E$, then the set $R(G)$ of red triangles in $G$ is a perfect $3$-cover of $G$. From this and from Proposition \ref{perfect k-trees which are alpha excellent} it follows that $G$ is an $\alpha$-excellent graph. Thus we have the following proposition that we will need when proving our main theorem.

\begin{prop} \label{2-trees belonging to the family E}
Every $2$-tree belonging to the family $\cal E$ has a perfect $3$-cover and it is an $\alpha$-excellent graph.
\end{prop}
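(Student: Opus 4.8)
The plan is to prove the statement by structural induction on the recursive definition of the family $\cal E$, the key assertion being that for every $2$-tree $G\in{\cal E}$ the set $R(G)$ of red triangles is a perfect $3$-cover of $G$. Once this is established, the $\alpha$-excellence of $G$ follows immediately from Proposition~\ref{perfect k-trees which are alpha excellent}, since each member of $\cal E$ is a $2$-tree by definition. Thus the whole argument reduces to tracking, through each construction step, how the family of red triangles partitions the vertex set.

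For the base case, the $2$-tree of order~$3$ has a single triangle, which is red, and that triangle trivially covers each of its three vertices exactly once; hence $R(G)$ is a perfect $3$-cover. For the inductive step I would assume that $G'\in{\cal E}$ and that $R(G')$ is a perfect $3$-cover of $G'$, and then verify the claim for the two operations. In the operation ${\cal O}_1$ applied to an edge $v_1v_2$ of $G'$, the only new red triangle is $u_1u_2u_3$, which covers precisely the three newly added vertices $u_1,u_2,u_3$, while the old vertices remain covered exactly once by the (unchanged) triangles of $R(G')$; so $R(G)=R(G')\cup\{u_1u_2u_3\}$ is again a perfect $3$-cover. In the operation ${\cal O}_2$ applied to $v_1v_2v_3\in R(G')$ and an edge $v_3v_4$, the set $R(G)$ is obtained from $R(G')$ by deleting $v_1v_2v_3$ and adding $u_0v_1v_2$ and $v_3u_1u_2$: the vertices $v_1,v_2$ lose their covering triangle $v_1v_2v_3$ but are recovered by $u_0v_1v_2$ (together with the new vertex $u_0$), the vertex $v_3$ loses $v_1v_2v_3$ and is recovered by $v_3u_1u_2$ (together with the new vertices $u_1,u_2$), and every other vertex of $G'$ keeps its unique red triangle; hence $R(G)$ is once more a perfect $3$-cover.

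The step requiring the most care is the bookkeeping for ${\cal O}_2$, where the recoloring must be accounted for correctly: the triangle $v_1v_2v_3$ moves from $R(G')$ into $B(G)$ and the newly created triangle $v_3v_4u_1$ is blue, so neither contributes to the red cover, while exactly the right two red triangles $u_0v_1v_2$ and $v_3u_1u_2$ are introduced. One should also check the possibly degenerate configuration $v_4\in\{v_1,v_2\}$: in that case $v_4$ is already a vertex of $G'$ and, being one of $v_1,v_2$, is simply recovered by $u_0v_1v_2$ exactly as above, so no separate treatment is needed. Finally, if one does not wish to take the $2$-tree property of the members of $\cal E$ for granted, it can be confirmed alongside the induction by noting that in each operation every added vertex is joined to two mutually adjacent vertices of the current graph ($u_1$ to $v_1,v_2$, then $u_2$ to $v_2,u_1$, then $u_3$ to $u_1,u_2$ in ${\cal O}_1$; and $u_0$ to $v_1,v_2$, $u_1$ to $v_3,v_4$, then $u_2$ to $v_3,u_1$ in ${\cal O}_2$).
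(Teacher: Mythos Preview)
Your proof is correct and follows exactly the approach sketched in the paper: both argue by structural induction on the recursive definition of~$\cal E$ that the set $R(G)$ of red triangles forms a perfect $3$-cover, and then invoke Proposition~\ref{perfect k-trees which are alpha excellent} to conclude $\alpha$-excellence. You have simply spelled out in detail the bookkeeping for ${\cal O}_1$ and ${\cal O}_2$ that the paper leaves implicit.
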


The next theorem is the main result of this paper and it presents two characterizations of the $\alpha$-excellent $2$-trees: a constructive characterization, and a characterization in terms of perfect $3$-covers.

\begin{thm} \label{excellent-2-trees}
If $G$ is a $2$-tree of order $n\ge 3$, then the following statements are equivalent: \vspace*{-3ex}
\begin{enumerate}
\item $G$ has a perfect $3$-cover.
\item $G$ belongs to the family ${\cal E}$.
\item $G$ is an $\alpha$-excellent graph.
\end{enumerate}
\end{thm}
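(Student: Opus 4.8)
The plan is to prove the cycle of implications $(1)\Rightarrow(2)\Rightarrow(3)\Rightarrow(1)$. The implication $(2)\Rightarrow(3)$ is already done: it is exactly Proposition~\ref{2-trees belonging to the family E}. So two implications remain, and the real work is in $(1)\Rightarrow(2)$ and $(3)\Rightarrow(1)$.

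\medskip
\textbf{Proving $(3)\Rightarrow(1)$.} Suppose $G$ is an $\alpha$-excellent $2$-tree of order $n\ge 3$; I want a perfect $3$-cover. I would argue by induction on $n$. The base case $n=3$ is just $K_3$, whose single triangle is the cover. For the inductive step, locate a simplicial vertex $u$ of $G$; since $G$ is a $2$-tree, $N_G[u]$ induces a triangle, say $uvw$, and $u$ has degree $2$. The key structural fact to extract from $\alpha$-excellence is that simplexes are "scarce and well-behaved": by Proposition~\ref{simplicies-are-pairwise-vertex-disjoint} no vertex lies in two simplexes, so in particular $v$ and $w$ are not themselves simplicial and the triangle $uvw$ is the only simplex through $u$. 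The crux is to show that near $u$ the $2$-tree has a forced local structure—essentially one of the ``reverse'' configurations of ${\cal O}_1$ or ${\cal O}_2$—so that deleting the three vertices of a suitable $O_1$- or $O_2$-block yields a smaller $2$-tree $G'$ that is still $\alpha$-excellent. Then by induction $G'$ has a perfect $3$-cover, and adding back the deleted triangle (which is vertex-disjoint from $G'$ by construction) gives a perfect $3$-cover of $G$. The delicate points are (a) verifying that the reduced graph $G'$ is again a $2$-tree and again $\alpha$-excellent — one must check that no vertex of $G'$ is ``killed'' by the deletion, which is where one uses that an $\alpha$-set of $G$ avoiding $u$ restricts to an $\alpha$-set of $G'$ — and (b) a careful case analysis of how the vertices adjacent to $v$ and $w$ sit, to see which reduction applies. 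I expect this case analysis to be the main obstacle.

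\medskip
\textbf{Proving $(1)\Rightarrow(2)$.} Suppose $G$ is a $2$-tree with a perfect $3$-cover ${\cal P}$; I want $G\in{\cal E}$, which means building $G$ from $K_3$ by the operations ${\cal O}_1,{\cal O}_2$ \emph{together with} the correct red/blue labelling. Again induct on $n$. For $n=3$, colour the unique triangle red; this is item (1) of the definition of ${\cal E}$. For $n\ge 6$ (the order of a $2$-tree with a perfect $3$-cover is divisible by $3$), take a simplicial vertex $u$ with simplex $uvw$. The member $P\in{\cal P}$ containing $u$ is a triangle on three vertices; since $u$ has degree $2$ with $N_G(u)=\{v,w\}$, we get $P=uvw$, and moreover $v$ and $w$ lie in no other member of ${\cal P}$, so $v,w$ have all their \emph{other} neighbours outside $P$. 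Examining how $v,w$ attach to the rest of the $2$-tree (using condition $(iii)$ of Rose's characterization to control separators) forces the three-vertex neighbourhood-block around $u$ to be precisely the set of three new vertices added by an application of ${\cal O}_1$ or ${\cal O}_2$. Removing that block gives a $2$-tree $G'$ on $n-3$ vertices with perfect $3$-cover ${\cal P}\setminus\{P\}$, hence $G'\in{\cal E}$ by induction, and then $G$ is obtained from $G'$ by the corresponding operation; one checks the red/blue bookkeeping in the definitions of ${\cal O}_1$ and ${\cal O}_2$ matches so that the inherited labelling on $G$ is exactly the one those operations prescribe (in particular $R(G)$ is again the perfect $3$-cover). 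The subtlety mirroring the previous part: proving that the local picture around $u$ must be one of exactly these two configurations, and that ${\cal P}\restriction G'$ is still perfect.

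\medskip
In short, both nontrivial implications run by the same ``peel off a simplicial vertex and its block of three, reduce, induct'' scheme; the engine is that in a $2$-tree a simplicial vertex forces a triangle, Proposition~\ref{simplicies-are-pairwise-vertex-disjoint} forces that triangle to be essentially isolated among simplexes, and perfect $3$-covers / $\alpha$-excellence are both preserved under the reduction. The hard part throughout is the finite but somewhat intricate case analysis classifying the local structure at $u$ into the two operation types, and handling the exceptional small cases (e.g.\ when $v_4\in\{v_1,v_2\}$ in ${\cal O}_2$, or when the whole graph is nearly exhausted by one block).
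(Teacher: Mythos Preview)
Your overall strategy---induction by peeling three vertices near a simplicial vertex---is the right idea, and it is what the paper does. But the proposal as written has a structural gap, not just missing details.

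\textbf{The main gap: an arbitrary simplicial vertex does not work.} You write ``locate a simplicial vertex $u$'' and then propose to delete its simplex $uvw$ (which, you correctly note, must lie in any perfect $3$-cover). But deleting $\{u,v,w\}$ need not leave a $2$-tree. Concretely: build a $2$-tree on $\{a,b,c,d,e,f,g,h,i\}$ by starting from $abc$ and successively adding $d$ on $ab$, $e$ on $bd$, $f$ on $de$, $g$ on $bd$, $h$ on $dg$, $i$ on $gh$. Then $\{abc,def,ghi\}$ is a perfect $3$-cover and $f$ is simplicial with simplex $def$, but $G-\{d,e,f\}$ has only $7$ edges on $6$ vertices and is not a $2$-tree (the edge $bg$ lies in no triangle). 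The paper avoids this by not taking an arbitrary simplicial vertex: it takes a \emph{longest $3$-path} $(T_1,\ldots,T_p)$ and works at its far end, which forces the last few triangles to sit in a controlled ``leaf'' configuration. Your sketch gives no mechanism for selecting the right simplicial vertex.

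\textbf{A second gap: the ${\cal O}_2$ bookkeeping.} You assert that after deleting the three-vertex block the remaining cover is ${\cal P}\setminus\{P\}$. That is correct for an ${\cal O}_1$-type block (the block is the red triangle $u_1u_2u_3$), but wrong for ${\cal O}_2$: there the three added vertices $u_0,u_1,u_2$ do \emph{not} form a triangle, the simplex of the simplicial vertex $u_0$ is $u_0v_1v_2$ with $v_1,v_2$ old vertices, and reversing ${\cal O}_2$ requires removing \emph{two} red triangles and reinstating $v_1v_2v_3$. So ``adding back the deleted triangle'' does not recover the cover in that case.

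\textbf{What the paper actually does.} The paper proves only the single hard implication $(\mathrm{c})\Rightarrow(\mathrm{b})$ (and $(\mathrm{c})\Rightarrow(\mathrm{a})$ alongside it); the others follow from Propositions~\ref{perfect k-trees which are alpha excellent} and~\ref{2-trees belonging to the family E}. At the end of a longest $3$-path it classifies the local picture $H$ into ten candidate shapes, uses $\alpha$-excellence (via Proposition~\ref{simplicies-are-pairwise-vertex-disjoint} and direct independent-set swaps) to eliminate three of them, and for each of the remaining shapes deletes a specific set of three vertices, proves $\alpha(G')=\alpha(G)-1$, proves $G'$ is again $\alpha$-excellent, and identifies the reduction with ${\cal O}_1$ or ${\cal O}_2$. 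Your proposed cycle $(1)\Rightarrow(2)\Rightarrow(3)\Rightarrow(1)$ would require carrying out essentially this analysis \emph{twice} (once for $(3)\Rightarrow(1)$ and again for $(1)\Rightarrow(2)$), and in the $(1)\Rightarrow(2)$ direction you would anyway need $\alpha$-excellence (via Proposition~\ref{perfect k-trees which are alpha excellent}) to rule out the bad local shapes---so nothing is saved by separating the two implications.
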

\begin{proof} The implications $(\rm a) \Rightarrow (\rm c)$, $(\rm b) \Rightarrow (\rm a)$, and $(\rm b) \Rightarrow (\rm c)$ are obvious from Propositions \ref{perfect k-trees which are alpha excellent} and \ref{2-trees belonging to the family E}. Thus it suffices to prove the implication $(\rm c) \Rightarrow (\rm b)$ (but we prove the implications  $(\rm c) \Rightarrow (\rm a)$ and $(\rm c) \Rightarrow (\rm b)$ at the same time).

Thus assume that $G$ is an $\alpha$-excellent $2$-tree of order at least~$3$. By induction on the order of $G$ we shall prove that $G$ has a perfect $3$-cover and that $G$ belongs to the family $\cal E$. It is straightforward to observe that the implications $(\rm c) \Rightarrow (\rm a)$ and $(\rm c) \Rightarrow (\rm b)$ are true if $G$ is a $2$-tree of order~$n \le 6$. Now let $G$ be an $\alpha$-excellent $2$-tree of order greater than~$6$ and assume that the  implications $(\rm c) \Rightarrow (\rm a)$ and $(\rm c) \Rightarrow (\rm b)$ are true for smaller $\alpha$-excellent $2$-trees. Let $(T_1,T_2,\ldots,T_p)$ be a~longest $3$-path in $G$, that is, a longest sequence $T_1,T_2,\ldots,T_p$ of triangles in $G$, where  $|V({T_i}) \cap V({T_j})|=2$ if $|i-j|=1$, and  $|V({T_i})\cap V({T_j})|\le 1$ if $|i-j|\ge 2$ ($i,j\in [p]$). From the fact that $(T_1,T_2,\ldots,T_p)$ is a~longest $3$-path in $G$ (which is an $\alpha$-excellent $2$-tree of order at least~$7$) it follows that $p\ge 4$. Assume that $a$, $b$, $c$, $d$, and $e$ are vertices of $G$ for which  $V({T_{p-3}}) \cap V({T_{p-2}})= \{a,b\}$, $V({T_{p-2}})\setminus V({T_{p-3}})= \{c\}$, $V({T_{p-1}})\setminus V({T_{p-2}})= \{d\}$, and $V({T_{p}})\setminus  V({T_{p-1}})= \{e\}$. From the choice of $(T_1,\ldots ,T_p)$, the vertex~$e$ is of degree~$2$ and $T_p$ is a simplex in $G$. Let $G_0', G_1',\ldots, G_{\ell}'$ be the components of $G-\{a,b\}$, where $G_0'$ is that component which contains at least one vertex of $T_1$. It is clear that $\ell$ is positive integer.

We now let $G_i$ denote the subgraph of $G$ induced by $V({G_i'})\cup\{a,b\}$ for $i\in \{0\}\cup [\ell]$. Among the graphs $G_1, \ldots, G_{\ell}$, let $H$ be the graph that contains the triangles of the $3$-path ${\cal P}_0= (T_{p-2}, T_{p-1}^0,$ $T_p^0)$, where $T_{p-1}^0=T_{p-1}$ and $T_p^0=T_p$.  It is obvious that if $T_{p-2}$, $T_{p-1}$, and $T_p$ are the only triangles of $H$, then it is possible that $H$ is one of the graphs $H_1$, $H_1'$, $H_2$, and $H_2'$ shown in Fig. \ref{2-trees-Fig-2}. If $H$ contains the triangle $T_{p-2}$, $T_{p-1}$, $T_p$ and a~simplex $T_{p-1}'$ that shares an edge with the triangle $T_{p-2}$ in $H$, then it follows readily from Proposition \ref{simplicies-are-pairwise-vertex-disjoint} that $H$ is one of the graphs $H_3$ or $H_3'$ in Fig. \ref{2-trees-Fig-2}.

\begin{figure}[h!] \begin{center} \hspace*{2ex}\includegraphics[scale=0.95]{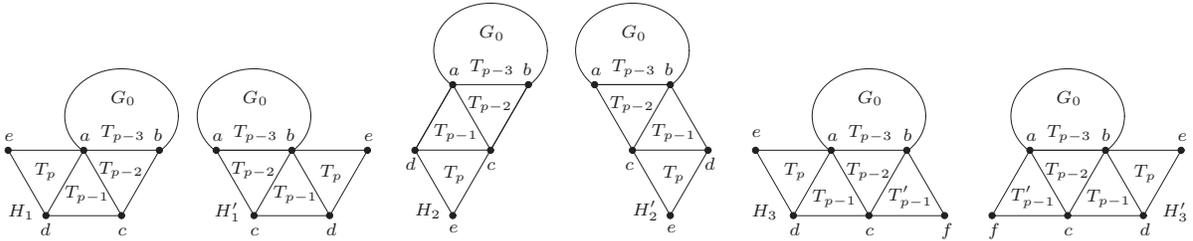}
\caption{Graphs $H_1$, $H_1'$, $H_2$, $H_2'$, $H_3$, and $H_3'$} \label{2-trees-Fig-2} \end{center}\end{figure}

Thus assume that no simplex of $H$ shares an edge with the triangle $T_{p-2}$.
In this case, $H$ is a subgraph induced by the triangles belonging to ${\cal P}_0$ and to some additional $3$-paths ${\cal P}_i=(T_{p-2}, T_{p-1}^i,T_p^i)$, where $i\in [n]$ and $n$ is a positive integer. It follows from Proposition \ref{simplicies-are-pairwise-vertex-disjoint} that if $n=1$, then $H$ is isomorphic to one of the graphs $H_4$, $H_5$ or $H_6$ in Fig. \ref{2-trees-zagubiony}.  Similarly, if $n=2$, then, as can easily be verified, $H$ is isomorphic to the graph $H_7$ shown in Fig. \ref{2-trees-zagubiony}.
Finally, we claim that the case $n\ge 3$ is impossible. Suppose, to the contrary,
that $n\ge 3$. Then let us first observe that if one of the edges $ac$ and $bc$ of the triangle $T_{p-2}$ belongs to at least~$3$ of the triangles $T_{p-1}^0=T_{p-1}$, $T_{p-1}^1, \ldots, T_{p-1}^n$, say to $T_{p-1}^i, T_{p-1}^j, T_{p-1}^k$ (where $0\le i<j<k\le n$), then at least two of the simplexes $T_{p}^i, T_{p}^j, T_{p}^k$ of $H$ (and of $G$) have a common vertex, which is impossible in an $\alpha$-excellent graph $G$. Thus assume that neither $ac$ nor $bc$ belongs to three of the triangles $T_{p-1}^0=T_{p-1}$, $T_{p-1}^1, \ldots, T_{p-1}^n$. Then necessarily $n=3$ and each of the edges $ac$ and $bc$ belongs to exactly two of the triangles $T_{p-1}^0, T_{p-1}^1, T_{p-1}^2, T_{p-1}^3$, say $ac$ belongs to $T_{p-1}^0$ and $T_{p-1}^1$, while $bc$ belongs to $T_{p-1}^2$ and $T_{p-1}^3$. Therefore, as it is easy to verify, if neither the simplexes $T_{p}^0$ and $T_{p}^1$ have a common vertex nor the simplexes $T_{p}^2$ and $T_{p}^3$ have a common vertex, then one of the simplexes $T_{p}^0$ and $T_{p}^1$ has a~common vertex with one of the simplexes $T_{p}^2$ and $T_{p}^3$, which again by Proposition~\ref{simplicies-are-pairwise-vertex-disjoint} is impossible as $G$ is an $\alpha$-excellent graph. This proves that the case $n\ge 3$ is impossible.

\begin{figure}[h!] \begin{center} \hspace*{2ex}\includegraphics[scale=1.0]{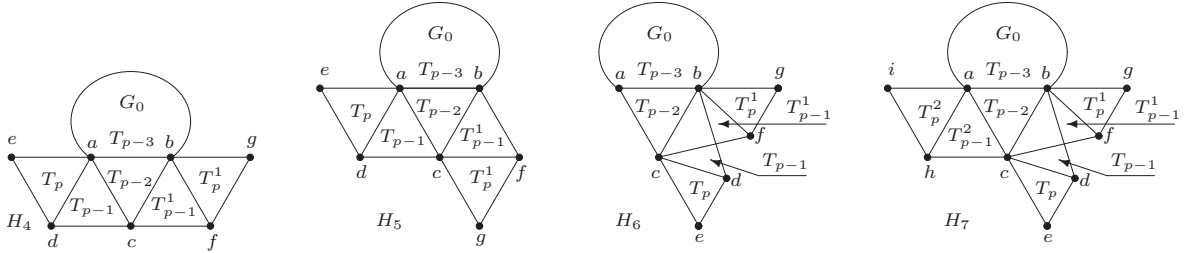}
\caption{Graphs $H_4$, $H_5$, $H_6$, and $H_7$} \label{2-trees-zagubiony} \end{center}\end{figure}

There are now several cases to consider depending on the structure of $H$. We begin showing that $H$ cannot be any of the graphs $H_1$, $H_1'$, $H_4$. Suppose, to the contrary, that $H=H_1$ (where $H_1$ is as illustrated in Fig. \ref{2-trees-Fig-2}). Then, since $G$ is an $\alpha$-excellent graph, there exists an $\alpha$-set $I$ in $G$ that contains $a$. However in this case, $(I \setminus\{a\}) \cup \{c,e\}$ is an independent set in $G$, and so $\alpha(G) \ge |(I \setminus\{a\}) \cup \{c,e\}| > |I| = \alpha(G)$, a contradiction which proves that $H\ne H_1$. Analogously, $H \ne H_1'$. Similarly, suppose that $H=H_4$ (where $H_4$ is shown in Fig. \ref{2-trees-zagubiony}). Let $I_d$ be an $\alpha$-set that contains $d$ in $G$. We note that $|I_d\cap \{b,f,g\}|=1$. If $b\in I_d$, then let $I_d' = (I_d\setminus \{b,d\})\cup \{c,e,g\}$. If $f \in I_d$, then let $I_d' = (I_d\setminus \{d,f\})\cup \{c,e,g\}$. If $g \in I_d$, then let $I_d' = (I_d\setminus \{d\})\cup \{c,e\}$. In all cases, the resulting set $I_d'$ is an independent set in $G$, and so $\alpha(G) \ge |I_d'| > |I_d| = \alpha(G)$, a contradiction. Hence, $H \ne H_4$.

In each of the next five cases (corresponding to the possible graphs $H$, that is, to the graphs $H_2$ (and $H_2'$), $H_3$, $H_5$, $H_6$, and $H_7$) we prove that $G$ belongs to the family $\cal E$ and has a perfect $3$-cover.

\emph{Case 1. $H=H_2$ or $H=H_2'$.}
Without loss of generality, assume that $H=H_2$  (see Fig. \ref{2-trees-Fig-2}). In this case, let $G'$ denote the subgraph $G-\{c,d,e\}$ of $G$. In the next three claims we explain the main relations between properties of the graphs $G$ and $G'$, that is, we prove that: (1) $\alpha(G')=\alpha(G)-1$; (2) $G'$ is an $\alpha$-excellent graph; (3)
$G$ has a perfect $3$-cover and belongs to the family $\cal E$.

\emph{Claim 1.1.} \label{Claim 1.1} $\alpha(G')=\alpha(G)-1$.

If $I'$ is an $\alpha$-set of $G'$, then $I'\cup\{e\}$ is an independent set in $G$
(as $N_G[e]\cap I'= \{c,d,e\}\cap I'= \emptyset$) and therefore $\alpha(G)\ge |I'\cup\{e\}|= \alpha(G')+1$. On the other hand, let $I$ be an $\alpha$-set of $G$. In this case, $|I\cap \{c,d,e\}|=1$ and $I \setminus  \{c,d,e\}$ is independent in $G'$. Thus, $\alpha(G') \ge |I\setminus \{c,d,e\}|=|I|-1=\alpha(G)-1$. Consequently, $\alpha(G')=\alpha(G)-1$.

\emph{Claim 1.2.} $G'$ is an $\alpha$-excellent graph.

Let $v$ be an arbitrary vertex of $G'$. We show that $v$ belongs to an $\alpha$-set of $G'$. Since $\alpha(G')=\alpha(G)-1$ (by Claim 1.1), it suffices to show that $v$ belongs to an independent set of cardinality $\alpha(G)-1=\alpha(G')$ in $G'$. Let $I_v$ be an $\alpha$-set of $G$ that contains $v$, and let $I_v'$ denote the set $I_v\setminus \{c,d,e\}$. Thus, $v\in I_v'$ and the set $I_v'$ is independent in $G'$. Furthermore, $I_v'$ is an $\alpha$-set of $G'$ noting that $|I_v\cap \{c,d,e\}|=1$ and $|I_v'| = |I_v\setminus \{c,d,e\}|= |I_v|-1= \alpha(G)-1=\alpha(G')$. This proves that $G'$ is an $\alpha$-excellent graph.

\emph{Claim 1.3.} $G$ has a perfect $3$-cover and $G$ belongs to the family $\cal E$.

By Claim 1.2, $G'$ is an $\alpha$-excellent graph. Since the order of $G'$ is less than the order of $G$, applying the induction hypothesis we infer that $G'$ has a perfect $3$-cover and $G'$  belongs to the family $\cal E$. Now, if ${\cal P}'$ is a~perfect $3$-cover of $G'$, then ${\cal P}'\cup\{cde\}$ is a perfect $3$-cover of $G$. In addition, since $G'$ belongs to the family $\cal E$, the graph $G'$ can be obtained recursively from a red triangle by operations ${\cal O}_1$ and ${\cal O}_2$, and, since $G$ can be obtained from $G'$ by the operation ${\cal O}_1={\cal O}_1(a,b)$ (that is, by ${\cal O}_1$ applied to the edge $ab$ of $G'$), the graph $G$ can be obtained recursively from a red triangle by operations ${\cal O}_1$ and ${\cal O}_2$. Thus, $G$ belongs to the family $\cal E$.

\emph{Case 2. $H=H_3$ or $H=H_3'$.}
Without loss of generality, assume that $H=H_3$  (see Fig. \ref{2-trees-Fig-2}). This time, let $G'$ denote the subgraph $G-\{d,e,f\}$ of $G$. As in Case 1, we study desired relations between properties of the graphs $G$ and $G'$.

\emph{Claim 2.1.} $\alpha(G')=\alpha(G)-1$.

Let $I'$ be an $\alpha$-set of $G'$. In this case, $|I'\cap \{a,b,c\}|=1$ and either $a\in I'$ or $\{b,c\}\cap I'\ne \emptyset$. Consequently, either $I'\cup\{f\}$ or $I'\cup \{e\}$ is an independent set in $G$, respectively, and therefore $\alpha(G)\ge |I'\cup \{f\}|= |I'\cup\{e\}|= \alpha(G')+1$. This proves that $\alpha(G)\ge \alpha(G')+1$. Now, let $I$ be an $\alpha$-set of $G$. Then $|I\cap \{a,b,c\}|\le 1$ and we consider four cases. If $I\cap \{a,b,c\}=\emptyset$, then $|I\cap \{d,e,f\}|=2$ and $(I\setminus \{d,e,f\}) \cup \{c\}$ is an independent set of cardinality $\alpha(G)-1$ in $G'$. If $a\in I$, then $f\in I$, and $I\setminus \{f\}$ is an independent set of cardinality $\alpha(G)-1$ in $G'$. If $b\in I$, then $|I\cap \{d,e\}|=1$, $f\notin I$, and $I\setminus \{d,e\}$ is an independent set of cardinality $\alpha(G)-1$ in $G'$. If $c\in I$, then $e\in I$, $f \notin I$, and $I\setminus \{e\}$ is an independent set of cardinality $\alpha(G)-1$ in $G'$. Consequently, $\alpha(G')\ge\alpha(G)-1$. This proves that $\alpha(G')=\alpha(G)-1$.

\emph{Claim 2.2.} $G'$ is an $\alpha$-excellent graph.

Let $v$ be an arbitrary vertex of $G'$. As in the proof of Claim 1.2, it suffices to show that $v$ belongs to an $\alpha$-set of $G'$, that is, to an independent set of cardinality $\alpha(G')=\alpha(G)-1$ in $G'$. Since $G$ is $\alpha$-excellent, every vertex of $G$ belongs to some $\alpha$-set in $G$. Let $I_x$ be an $\alpha$-set of $G$ that contains $x$, where $x\in V(G)$. We note that $I_a \cap \{d,e,f\}= \{f\}$ and, consequently, $I_a\setminus \{f\}$ is an $\alpha$-set of $G'$ that contains the vertex $a$. Similarly, $I_b\setminus \{d,e\}$ and $I_c \setminus \{e\}$ are $\alpha$-sets of $G'$ and they contain $b$ and $c$, respectively. If $v\in V({G'}) \setminus\{a,b,c\}$ and $I_v\cap \{a,b,c\}=\emptyset$, then $|I_v\cap \{d,e,f\}|=2$ and $(I_v\setminus  \{d,e,f\})\cup \{c\}$ is an $\alpha$-set of $G'$ and it contains $v$. Finally, if $v\in V({G'})\setminus \{a,b,c\}$ and $I_v\cap \{a,b,c\}\ne \emptyset$, then $|I_v\cap \{a,b,c\}|=1$, $|I_v\cap \{d,e,f\}|=1$, and, therefore, $I_v\setminus  \{d,e,f\}$ is an $\alpha$-set of $G'$ and it contains $v$.  Consequently, $G'$ is an $\alpha$-excellent graph.

\emph{Claim 2.3.} $G$ has a perfect $3$-cover and $G$ belongs to the family $\cal E$.

Now, similarly as in the proof of Claim 1.3, since $G'$ is an $\alpha$-excellent graph of order less than the order of~$G$, the induction hypothesis implies that $G'$ has a perfect $3$-cover and that $G'$ belongs to the family $\cal E$. Certainly, if ${\cal P}'$ is a perfect $3$-cover of $G'$, then $abc\in {\cal P}'$ and  $({\cal P}'\setminus \{abc\})\cup\{ade, bcf\}$ is a perfect $3$-cover of $G$. In addition, since $G'$ belongs to the family $\cal E$, the graph $G'$ can be obtained recursively from a~red triangle by operations ${\cal O}_1$ and ${\cal O}_2$. From this and from the obvious fact that $G$ can be obtained from $G'$ by the operation ${\cal O}_2= {\cal O}_2(bc,ac)$ (that is, by ${\cal O}_2$ applied to the edge $bc$ of the simplex $abc$ of $G'$ and to the edge $ac$ incident with the third vertex $c$ of the triangle $abc$), the graph $G$ can be obtained recursively from a red triangle by operations ${\cal O}_1$ and ${\cal O}_2$. Thus, $G$ belongs to the family $\cal E$.

\emph{Case 3. $H=H_5$.} In this case, we let $G'$ denote the subgraph $G-\{e,f,g\}$ of $G$ and, as before, we study desired relations between properties of the graphs $G$ and $G'$.

\emph{Claim 3.1.} $\alpha(G')= \alpha(G)-1$.

Let $I'$ be an $\alpha$-set of $G'$. In this case, $|I' \cap \{a,c,d\}|=1$. If $a \in I'$ or $d \in I'$, then the set $I'\cup \{g\}$ is an independent set of $G$, while if $c \in I'$, then the set $I'\cup \{e\}$ is an independent set of $G$, implying that $\alpha(G) \ge \alpha(G') + 1$. It remains to prove that $\alpha(G') \ge \alpha(G)-1$. To prove this, let $I$ be an $\alpha$-set of $G$. Thus, $|I\cap \{a,d,e\}|=1$ and $|I \cap \{c,f, g\}|=1$. If $a \in I$ or $d \in I$, then $|I \cap \{f,g\}|=1$ and we let $I' = I \setminus  \{f,g\}$. If $e \in I$ and $c \in I$, then we let $I' = I \setminus  \{e\}$. If $e \in I$ and $b \in I$, then $g \in I$ and we let $I' = (I\setminus \{e,g\}) \cup \{d\}$. If $e \in I$ and $I \cap \{b,c\} = \emptyset$, then $|I \cap \{f,g\}|=1$ and we let $I' = (I\setminus \{e,f,g\}) \cup \{c\}$. In all the above cases, the set $I'$ is an independent set in $G'$ and $|I'| = |I| - 1$, implying that $\alpha(G') \ge |I'| = |I| - 1 = \alpha(G) - 1$. As observed earlier, $\alpha(G') \le \alpha(G)-1$. Consequently, $\alpha(G') = \alpha(G)-1$.

\emph{Claim 3.2.}
$G'$ is an $\alpha$-excellent graph.

Since $G$ is an $\alpha$-excellent graph, every vertex of $G$ belongs to an $\alpha$-set of $G$. Let $I_x$ denote an $\alpha$-set of $G$ that contains the vertex $x$ of $G$. Let $v$ be an arbitrary vertex of $G'$. We show that $v$ belongs to an $\alpha$-set of $G'$, that is, to an independent set of cardinality $\alpha(G')=\alpha(G)-1$ in $G'$. We note that $|I_v \cap \{a,d,e\}|=1$, $|I_v \cap \{c,f,g\}|=1$, and $|I_v \cap \{a,b,c,d\}|\in \{0,1,2\}$. If $|I_v \cap \{a,b,c,d\}|=2$, then $I_v \cap \{a,b,c,d\}=\{b,d\}$, $g\in I_v$ and we let $I_v'=I_v\setminus \{g\}$. If $I_v \cap \{a,b,c,d\}=\emptyset$, then $e\in I_v$ and $|I_v\cap \{f,g\}|=1$ and we let $I_v'= (I_v\setminus\{e,f,g\})\cup \{c\}$. If $I_v \cap \{a,b,c,d\}=\{a\}$ or $I_v \cap \{a,b,c,d\}=\{d\}$, then $|I_v\cap \{f,g\}|=1$ and we let $I_v'=I_v\setminus\{f,g\}$. If $I_v \cap \{a,b,c,d\}=\{b\}$, then $e\in I_v$, $g\in I_v$, and we let $I_v'=(I_v\setminus\{e,g\})\cup \{d\}$.
If $I_v \cap \{a,b,c,d\}=\{c\}$, then $e\in I_v$ and we let $I_v'=I_v\setminus\{e\}$.
In all cases, the set $I_v'$ is an independent set in $G'$ that contains the vertex~$v$ and $|I_v'| = |I_v| - 1 = \alpha(G) - 1 = \alpha(G')$, implying that $v$ belongs to an $\alpha$-set of $G'$, as desired.

\emph{Claim 3.3.} $G$ has a perfect $3$-cover and $G$ belongs to the family $\cal E$.

Similarly as in the proofs of Claims 1.3 and 2.3, since $G'$ is an $\alpha$-excellent graph of order less than the order of~$G$, the induction hypothesis implies that $G'$ has a perfect $3$-cover and that $G'$ belongs to the family $\cal E$. Now, if ${\cal P}'$ is a perfect $3$-cover of $G'$, then $acd \in {\cal P}'$ and  $({\cal P}' \setminus \{acd\})\cup\{ade, cfg\}$ is a perfect $3$-cover of $G$. In addition, since $G'$ belongs to the family $\cal E$, the graph $G'$ can be obtained recursively from a red triangle by operations ${\cal O}_1$ and ${\cal O}_2$,  and, since $G$ can be obtained from $G'$ by the operation ${\cal O}_2= {\cal O}_2(ad,cb)$ (that is, by ${\cal O}_2$ applied to the edge $ad$ of the simplex $acd$ of $G'$ and to the edge $cb$ incident with the third vertex $c$ of the triangle $acd$), the graph $G$ can be obtained recursively from a red triangle by operations ${\cal O}_1$ and ${\cal O}_2$. Thus, $G$ belongs to the family $\cal E$.

\emph{Case 4. $H= H_6$.} In this case, we let $G'$ denote the subgraph $G-\{e,f,g\}$ of $G$ and, as before, we study desired relations between properties of the graphs $G$ and $G'$.

\emph{Claim 4.1.} $\alpha(G')= \alpha(G)-1$.

Let $I'$ be an $\alpha$-set of $G'$. In this case, $|I' \cap \{b,c,d\}|=1$.  If $c \in I'$ or $d \in I'$, then the set $I' \cup \{g\}$ is an independent set of $G$, while if $b \in I'$, then the set $I' \cup \{e\}$ is an independent set of $G$, implying that $\alpha(G) \ge \alpha(G') + 1$. It remains to prove that $\alpha(G') \ge \alpha(G)-1$. By supposition, $G$ is an $\alpha$-excellent graph, and so every vertex of $G$ belongs to some $\alpha$-set of $G$. Let $I$ be an $\alpha$-set of $G$ that contains the vertex~$c$. Necessarily, $g \in I$ and $\{b, e,f\} \cap I = \emptyset$. Thus, $I \setminus \{g\}$ is an independent set in $G'$, implying that $\alpha(G') \ge |I| - 1 = \alpha(G) - 1$. As observed earlier, $\alpha(G') \le \alpha(G)-1$. Consequently, $\alpha(G') = \alpha(G)-1$.

\emph{Claim 4.2.} $G'$ is an $\alpha$-excellent graph.

Since $G$ is an $\alpha$-excellent graph, every vertex of $G$ belongs to an $\alpha$-set of $G$. Let $I_x$ denote an $\alpha$-set of $G$ that contains the vertex $x$ of $G$. Let $v$ be an arbitrary vertex of $G'$. We show that $v$ belongs to an $\alpha$-set of $G'$, that is, to an independent set of cardinality $\alpha(G')=\alpha(G)-1$ in $G'$. This time we note that $|I_v \cap \{c,d,e\}|=1$, $|I_v \cap \{b,f,g\}|=1$, and $|I_v \cap \{a,b,c,d\}|\in \{0,1,2\}$. If $|I_v \cap \{a,b,c,d\}|=2$ (that is, if $I_v \cap \{a,b,c,d\}=\{a,d\}$) or $I_v \cap \{a,b,c,d\}=\{d\}$, then $|I_v \cap \{f,g\}| = 1$ and we let $I_v' = I_v \setminus \{f,g\}$. If $I_v \cap \{a,b,c,d\}=\emptyset$ or $I_v \cap \{a,b,c,d\}=\{a\}$, then $e\in I_v$, $|I_v \cap \{f,g\}| = 1$ and we let $I_v' = (I_v \setminus \{e,f,g\})\cup \{d\}$. If  $I_v \cap \{a,b,c,d\}=\{b\}$, then $e\in I_v$ and we let $I_v' = I_v \setminus \{e\}$. If  $I_v \cap \{a,b,c,d\}=\{c\}$, then $g\in I_v$ and we let $I_v' = I_v \setminus \{g\}$. In all cases, the set $I_v'$ is an independent set in $G'$ that contains the vertex~$v$ and $|I_v'| = |I_v| - 1 = \alpha(G) - 1 = \alpha(G')$, implying that $v$ belongs to an $\alpha$-set of $G'$, as desired.

\emph{Claim 4.3.} $G$ has a perfect $3$-cover and $G$ belongs to the family $\cal E$.

Similarly as in the proofs of Claims 1.3, 2.3 and 3.3, since $G'$ is an $\alpha$-excellent graph of order less than the order of~$G$, the induction hypothesis implies that $G'$ has a perfect $3$-cover and that $G'$ belongs to the family $\cal E$. Now, if ${\cal P}'$ is a perfect $3$-cover of $G'$, then $bcd\in {\cal P}'$ and  $({\cal P}' \setminus \{bcd\})\cup\{cde, bfg\}$ is a perfect $3$-cover of $G$. In addition, since $G'$ belongs to the family $\cal E$, the graph $G'$ can be obtained recursively from a red triangle by operations ${\cal O}_1$ and ${\cal O}_2$,  and, since $G$ can be obtained from $G'$ by the operation ${\cal O}_2= {\cal O}_2(cd,bc)$ (that is, by ${\cal O}_2$ applied to the edge $cd$ of the simplex $bcd$ of $G'$ and to the edge $bc$ incident with the third vertex $b$ of the triangle $bcd$), the graph $G$ can be obtained recursively from a red triangle by operations ${\cal O}_1$ and ${\cal O}_2$. Thus, $G$ belongs to the family~$\cal E$.

\emph{Case 5. $H= H_7$.} In this case, we let $G'$ denote the subgraph $G-\{e,f,g\}$ of $G$. Using identical arguments as in the proof of Case 4, we prove that $G$ has a perfect $3$-cover and it belongs to the family $\cal E$. We omit the details.

Thus all possible cases have been considered. This completes the proof of Theorem~\ref{excellent-2-trees}.
\end{proof}

It is easy to observe that the corona graph $H\circ K_1$ (the graph formed from $H$ by adding for each vertex $v$ in $H$ a new vertex $v'$ and the edge $vv'$) is an $\alpha$-excellent graph for every graph $H$. This implies that every graph is an induced subgraph of an $\alpha$-excellent graph. As a consequence of Theorem~\ref{excellent-2-trees}, we have the following property of $2$-trees.

\begin{cor}
\label{cor:main}
Every $2$-tree is an induced subgraph of an $\alpha$-excellent $2$-tree.
\end{cor}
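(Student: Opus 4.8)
The plan is to reduce the statement to the perfect‑$3$‑cover criterion of Theorem~\ref{excellent-2-trees}: it suffices to show that every $2$-tree $G$ is an induced subgraph of some $2$-tree $G^{*}$ that has a perfect $3$-cover, since then $G^{*}$ is $\alpha$-excellent by the implication $(\rm a)\Rightarrow(\rm c)$ of Theorem~\ref{excellent-2-trees} (equivalently, by Proposition~\ref{perfect k-trees which are alpha excellent}). Note that the corona construction mentioned just before the corollary does not apply directly, because the corona $H\circ K_1$ of a $2$-tree $H$ is in general \emph{not} a $2$-tree; instead one has to enlarge $G$ only by legitimate $2$-tree expansions.

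First I would dispose of the trivial case $G=K_2$, which is already $\alpha$-excellent, and assume $n\ge 3$. For each vertex $v$ of $G$ fix a neighbour $f(v)\in N_G(v)$, write $V(G)=\{v_1,\ldots,v_n\}$, and build $G^{*}$ from $G$ by carrying out, successively for $j=1,\ldots,n$, the following two $2$-tree operations: add a new vertex $v_j^1$ joined to $v_j$ and $f(v_j)$, and then add a new vertex $v_j^2$ joined to $v_j$ and $v_j^1$. Each step joins the new vertex to two vertices that are already present and mutually adjacent (the pair $v_j,f(v_j)$ is an edge of $G$, and the pair $v_j,v_j^1$ is the edge created in the previous step), so $G^{*}$ is a $2$-tree.

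The next step is to verify the two required properties of $G^{*}$. \emph{$G$ is induced in $G^{*}$:} the only edges of $G^{*}$ incident with a new vertex are $v_jv_j^1$, $f(v_j)v_j^1$, $v_jv_j^2$, $v_j^1v_j^2$ (for $j\in[n]$); hence $G^{*}$ has no edge joining two distinct gadgets and no edge joining two vertices of $V(G)$ beyond those already in $G$, so $G^{*}[V(G)]=G$. \emph{$G^{*}$ has a perfect $3$-cover:} the family $\mathcal{P}=\{\{v_j,v_j^1,v_j^2\}:j\in[n]\}$ consists of triangles (the three pairs in each set are adjacent by construction), its members are pairwise disjoint, and their union is $V(G)\cup\{v_j^1,v_j^2:j\in[n]\}=V(G^{*})$, so every vertex of $G^{*}$ lies in exactly one member of $\mathcal{P}$. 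Applying Theorem~\ref{excellent-2-trees} (or Proposition~\ref{perfect k-trees which are alpha excellent}) to $G^{*}$ then finishes the proof.

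There is no serious obstacle in this argument; it is essentially a one‑line reduction once the right gadget is chosen. The two points that genuinely require care are exactly the checks indicated above: that every vertex insertion attaches the new vertex to a pair of \emph{already present and adjacent} vertices (this is why $v_j^2$ is attached to $v_j$ and to the freshly created $v_j^1$, rather than to two old vertices), so that $G^{*}$ is indeed a $2$-tree; and that the attached gadgets create no edge inside $V(G)$ and no edge between different gadgets, so that $G$ survives as an \emph{induced} subgraph.
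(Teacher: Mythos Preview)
Your proof is correct and follows essentially the same approach as the paper: both reduce to the perfect $3$-cover criterion of Theorem~\ref{excellent-2-trees}, and both cover an uncovered vertex $v$ by fixing a neighbour $v'$, adding a new vertex adjacent to $v,v'$, and then a second new vertex adjacent to $v$ and the first new vertex. The only difference is cosmetic: the paper begins with an arbitrary family ${\cal P}$ of vertex-disjoint triangles in $G$ and attaches the gadget only to the vertices not already covered by ${\cal P}$, whereas you take ${\cal P}=\emptyset$ and attach the gadget at every vertex of $G$; your construction is thus the special case ${\cal P}=\emptyset$ of the paper's, yielding a somewhat larger $G^{*}$ but with an identical verification.
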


\begin{proof} Let $G$ be a $2$-tree and let $\cal P$ be a family of vertex-disjoint triangles of $G$. Let $Q$ be the set of all vertices in $G$ that do not belong to any triangle in ${\cal P}$. If the set $Q$ is empty, then, by Theorem \ref{excellent-2-trees}, $G$ is an $\alpha$-excellent $2$-tree itself (and, therefore, it has the desired property). Thus assume that $Q$ is nonempty. For each vertex $v \in Q$, we do the following. Let $v'$ be an arbitrary neighbor of $v$ in $G$. We now add two new vertices $x_{vv'}$ and $y_{vv'}$, and four edges $x_{vv'}v$, $x_{vv'}v'$, $y_{vv'}v$ and $y_{vv'}x_{vv'}$. We note that the newly constructed triangle $vx_{vv'}y_{vv'}$ covers the vertex $v$ and the two new vertices $x_{vv'}$ and $y_{vv'}$. Let $G'$ be a resulting $2$-tree obtained from $G$ by performing this operation for all vertices $v \in Q$. Then the set ${\cal P} \cup \bigcup_{v \in Q} \{vx_{vv'}y_{vv'}\} $ is a perfect $3$-cover of $G'$. Thus, by Theorem~\ref{excellent-2-trees}, the graph $G'$ is an $\alpha$-excellent $2$-tree. By construction, the $2$-tree $G$ is an induced subgraph of the $\alpha$-excellent $2$-tree $G'$. This completes the proof of Corollary~\ref{cor:main}.
\end{proof}

We close this paper with the following open question that we have yet to settle: Is it true that if $k\ge 3$ is an integer and $G$ is an $\alpha$-excellent $k$-tree of order at least $k+1$, then $G$ has a perfect $(k+1)$-cover?

\end{document}